\numberwithin{equation}{section}
\newtheorem{theorem}{Theorem}[section]
\newtheorem{lemma}[theorem]{Lemma}
\newtheorem{remark}[theorem]{Remark}
\newcommand{\C}{\mathbb{C}}
\newcommand{\R}{\mathbb{R}}
\newcommand{\Q}{\mathbb{Q}}
\newcommand{\add}[2]{\displaystyle\sum_{#1}^{#2}}
\newcommand{\p}[1]{\left( #1 \right)}
\newcommand{\Lsym}{L(\textup{Sym}^n f,s)}
\newcommand{\new}{S_k^{\textup{new}}(\Gamma_0(N))}
\newcommand{\sym}{\textup{Sym}^n f}
\title[The Error Term in the Sato-Tate Conjecture]{The Error Term in the Sato-Tate Conjecture}
\author{Jesse Thorner}
\address{Jesse Thorner, Department of Mathematics and Computer Science, Emory University, Atlanta, Georgia 30322}
\email{jesse.thorner@gmail.com}
\date{\today}
\begin{document}
\maketitle
\vspace{-.3in}
\begin{abstract}
Let $f(z)=\sum_{n=1}^\infty a(n)e^{2\pi i nz}\in\new$ be a newform of even weight $k\geq2$ that does not have complex multiplication.  Then $a(n)\in\R$ for all $n$, so for any prime $p$, there exists $\theta_p\in[0,\pi]$ such that $a(p)=2p^{(k-1)/2}\cos(\theta_p)$.  Let $\pi(x)=\#\{p\leq x\}$.  For a given subinterval $I\subset[0,\pi]$, the now-proven Sato-Tate Conjecture tells us that as $x\to\infty$,
\[
\#\{p\leq x:\theta_p\in I\}\sim \mu_{ST}(I)\pi(x),\quad \mu_{ST}(I)=\int_{I} \frac{2}{\pi}\sin^2(\theta)~d\theta.
\]
Let $\epsilon>0$.  Assuming that the symmetric power $L$-functions of $f$ are automorphic and satisfy Langlands functoriality, we prove that as $x\to\infty$,
\[
\#\{p\leq x:\theta_p\in I\}=\mu_{ST}(I)\pi(x)+O\left(\frac{x}{(\log x)^{9/8-\epsilon}}\right),
\]
where the implied constant is effectively computable and depends only on $k,N,$ and $\epsilon$.
\end{abstract}


\section{Introduction and Statement of Results}


Let
\begin{equation}
f(z)=\sum_{n=1}^\infty a(n)q^n\in S_{k}^{\textup{new}}(\Gamma_0(N)),\quad q=e^{2\pi iz}
\end{equation}
be a newform of even weight $k\geq2$ with trivial character.  Then $a(n)\in\R$ for all $n$, and as a consequence of Deligne's proof of the Weil conjectures, for each prime $p$ there exists an angle $\theta_p\in[0,\pi]$ such that
\begin{equation}
a(p)=2p^{(k-1)/2}\cos(\theta_p).
\end{equation}
For a newform associated to an elliptic curve $E/\Q$ (in which case $k=2$), Sato and Tate independently conjectured the distribution of the sequence $\{\theta_p\}$ as $p$ varies through the primes; the following generalization of the conjecture for $k\geq2$ was proven by Barnett-Lamb, Geraghty, Harris, and Taylor \cite{sato-tate}.
\begin{theorem}[The Sato-Tate Conjecture]
Suppose that $f\in S_{k}^{\textup{new}}(\Gamma_0(N),\chi)$ does not have complex multiplication, and let $F:[0,\pi]\to\C$ be a Riemann-integrable function.  Then
\[
\lim_{x\to\infty}\frac{1}{\pi(x)}\sum_{p\leq x}F(\theta_p)=\int_0^\pi F(\theta)~d\mu_{ST},
\]
where $d\mu_{ST}$ is the Sato-Tate measure $\frac{2}{\pi}\sin^2(\theta)d\theta$.
\end{theorem}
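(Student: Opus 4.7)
The plan is to invoke a Weyl-type equidistribution criterion adapted to the Sato-Tate measure.  Since $d\mu_{ST}=\frac{2}{\pi}\sin^2\theta\,d\theta$ is the pushforward to $[0,\pi]$ of Haar measure on conjugacy classes in $SU(2)$, the characters $U_n(\cos\theta)=\sin((n+1)\theta)/\sin\theta$ of the irreducible $(n+1)$-dimensional representations of $SU(2)$ form a complete orthonormal basis of $L^2([0,\pi],d\mu_{ST})$.  Any Riemann-integrable $F$ can be squeezed between continuous functions of arbitrarily small $\mu_{ST}$-gap, and continuous functions on $[0,\pi]$ are uniform limits of polynomials in $\cos\theta$, hence of finite linear combinations of the $U_n$ (Stone-Weierstrass).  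Therefore the theorem reduces to proving, for every integer $n\geq 1$,
\[
\sum_{p\leq x}U_n(\cos\theta_p)=o(\pi(x)).
\]

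The link to automorphy is the identity $U_n(\cos\theta_p)=\sum_{j=0}^{n}e^{i(n-2j)\theta_p}$, which realizes $U_n(\cos\theta_p)$ as the $p$-th normalized Hecke eigenvalue $a_{\sym}(p)$ of the conjectural symmetric power lift $\sym$.  Under the working hypothesis that $\sym$ is cuspidal automorphic on $\GL_{n+1}(\mathbb{A}_\Q)$ (cuspidality being forced by the non-CM assumption on $f$), $\Lsym$ admits analytic continuation to $\C$ and a functional equation of standard shape.  The critical analytic input is then the non-vanishing of $\Lsym$ on the line $\re(s)=1$; I would extract this by the Rankin-Selberg method of Jacquet-Shalika applied to the positive-coefficient Dirichlet series $L(s,\sym\otimes\overline{\sym})$, or equivalently via Shahidi's Langlands-Shahidi construction.

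Given non-vanishing on $\re(s)=1$, a Wiener-Ikehara Tauberian argument applied to $-L'/L(s,\sym)$ would yield the prime number theorem
\[
\sum_{p\leq x}a_{\sym}(p)\log p = o(x),
\]
and partial summation would convert this into the desired $\sum_{p\leq x}U_n(\cos\theta_p)=o(\pi(x))$.  The main obstacle is of course the automorphy hypothesis itself: proving cuspidal automorphy of $\sym$ for every $n\geq 1$ is out of reach by current methods, and the actual theorem of Barnett-Lamb-Geraghty-Harris-Taylor is instead a \emph{potential} automorphy statement, which via Brauer induction and solvable base change is nevertheless strong enough to establish the meromorphic continuation and non-vanishing on $\re(s)=1$ required to run the argument above.
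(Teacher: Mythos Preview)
The paper does not prove this statement at all: it is stated as a theorem and attributed, with a citation, to Barnett-Lamb, Geraghty, Harris, and Taylor \cite{sato-tate}. There is therefore no ``paper's own proof'' to compare your proposal against. The paper takes the Sato--Tate Conjecture as established background and moves on to its real goal, which is Theorem~\ref{main-theorem} on the error term.

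That said, your sketch is a correct outline of the standard strategy. The reduction via Weyl's criterion to $\sum_{p\leq x}U_n(\cos\theta_p)=o(\pi(x))$ is exactly right, and indeed the paper itself later studies the quantity $\Phi_{\sym}(x)=\sum_{p\leq x}U_n(\cos\theta_p)$ (see (\ref{Phi}) and Lemma~\ref{PNT}), though under the stronger hypothesis of full automorphy and functoriality in order to obtain a quantitative bound rather than merely $o(\pi(x))$. Your identification of the key analytic input---nonvanishing of $\Lsym$ on $\re(s)=1$ via Jacquet--Shalika or Langlands--Shahidi---is accurate, and your closing remark that the unconditional theorem rests on \emph{potential} automorphy combined with Brauer induction is the honest statement of where the real depth lies.
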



Since Riemann-integrable functions can be uniformly approximated by step functions, if suffices for us to consider the function
\begin{equation}
\pi_{f,I}(x):=\#\{p\leq x:\theta_p\in I\},
\end{equation}
in which case Theorem \ref{sato-tate} tells us that if $I=[\alpha,\beta]\subset[0,\pi]$ is fixed, then
\begin{equation}
\label{sato-tate}
\pi_{f,I}(x)\sim \mu_{ST}(I)\pi(x).
\end{equation}

The Sato-Tate Conjecture governs much of the statistical behavior of the Fourier coefficients of $f$.  It is known \cite{Shahidi} that the Sato-Tate Conjecture follows from the analytic properties of the symmetric power $L$-functions associated to $f$ that are predicted by Langlands functoriality.  In order to bound the error in (\ref{sato-tate}), one must assume that all symmetric power $L$-functions of $f$ have these conjectured analytic properties.


There have been a number of estimates for the error in (\ref{sato-tate}) under the additional assumption that the symmetric power $L$-functions of $f$ satisfy the Generalized Riemann Hypothesis for symmetric power $L$-functions (GRH).  Under this additional assumption, building on the work of Murty \cite{Murty}, Bucur and Kedlaya \cite{BK} proved that if $f$ is the newform associated to an elliptic curve $E/\Q$ without complex multiplication, then
\[
\pi_{f,I}(x)=\mu_{ST}(I)\pi(x)+O(x^{3/4}\sqrt{\log(Nx)}),
\]
where $N$ is the conductor of $E$.  When $f\in S_k^{\textup{new}}(\Gamma_0(N))$ is a newform of even weight $k\geq2$ with squarefree level $N$ (such a newform necessarily does not have complex multiplication), Rouse and the author \cite{RT} proved a completely explicit version of the Sato-Tate Conjecture with a slight improvement in Murty's error term; this can be briefly stated as
\[
\pi_{f,I}(x)=\mu_{ST}(I)\pi(x)+O\left(\frac{x^{3/4}\log(Nkx)}{\log(x)}\right).
\]

It is important to understand the error term in the Sato-Tate Conjecture without the assumption of GRH.  The goal of this note is to prove the following result, providing such an error term.

\begin{theorem}
\label{main-theorem}
Let $f\in S_k^{\textup{new}}(\Gamma_0(N))$ be a newform of even weight $k\geq2$ and trivial character without complex multiplication.  Suppose that all of the symmetric power $L$-functions of $f$ are automorphic and satisfy Langlands functoriality.  If $[\alpha,\beta]\subset[0,\pi]$ is fixed, then for any $\epsilon>0$,
\[
\pi_{f,I}(x)=\mu_{ST}(I)\pi(x)+O\left(\frac{x}{(\log x)^{9/8-\epsilon}}\right),
\]
where the implied constant is effectively computable and depends only on $k,N,$ and $\epsilon$.
\end{theorem}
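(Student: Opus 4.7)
The plan is classical: approximate the characteristic function $\mathbf{1}_I$ by a polynomial in $\cos\theta$, translate that polynomial into a linear combination of traces of symmetric powers of Satake parameters, and invoke a prime number theorem for each symmetric-power $L$-function. The main steps are as follows.

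First, I would use the fact that the Chebyshev polynomials of the second kind $\{U_n(\cos\theta)\}_{n\geq 0}$ form an orthonormal basis of $L^2([0,\pi], d\mu_{ST})$ to produce, via a Beurling--Selberg/Vaaler-type construction, trigonometric polynomials
\[
F_M^{\pm}(\theta) = \sum_{n=0}^M c_n^{\pm}\, U_n(\cos\theta)
\]
of degree at most $M$ sandwiching $\mathbf{1}_I$, i.e. $F_M^-(\theta) \leq \mathbf{1}_I(\theta) \leq F_M^+(\theta)$ on $[0,\pi]$, with $\int_0^\pi (F_M^+ - F_M^-)\,d\mu_{ST} \ll 1/M$. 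By orthogonality, $c_0^\pm = \mu_{ST}(I) + O(1/M)$, and $|c_n^\pm| \ll 1$ for $n \geq 1$.

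The link to the symmetric powers is the identity $U_n(\cos\theta_p) = \lambda_{\mathrm{Sym}^n f}(p)$, the normalized $p$-th Hecke eigenvalue of $\mathrm{Sym}^n f$. Summing $F_M^\pm(\theta_p)$ over primes $p \leq x$ and sandwiching then yields
\[
\pi_{f,I}(x) = \mu_{ST}(I)\pi(x) + O\!\left(\frac{\pi(x)}{M}\right) + \sum_{n=1}^M c_n^\pm \sum_{p\leq x} \lambda_{\mathrm{Sym}^n f}(p).
\]
Under the hypothesis of automorphy and functoriality, each $L(s, \mathrm{Sym}^n f)$ is a self-dual automorphic $L$-function on $\mathrm{GL}(n+1)$, and the Rankin-Selberg convolutions $L(s, \mathrm{Sym}^m f \times \mathrm{Sym}^n f)$ are available as standard automorphic $L$-functions. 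The Clebsch-Gordan decomposition yields the factorization $L(s, \mathrm{Sym}^n f \times \mathrm{Sym}^n f) = \prod_{j=0}^n L(s, \mathrm{Sym}^{2j} f)$, whose simple pole at $s=1$ provides Hoffstein-Lockhart-type effective lower bounds on $|L(1,\mathrm{Sym}^n f)|$. Combining these with a standard de la Vall\'ee-Poussin zero-free region (which becomes effective once exceptional real zeros are ruled out), the explicit formula then gives
\[
\left|\sum_{p\leq x} \lambda_{\mathrm{Sym}^n f}(p)\right| \ll \frac{Q(n)\, x}{(\log x)^B}
\]
for some polynomial $Q$ and exponent $B$, with constants depending effectively only on $k$, $N$, and $B$. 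Inserting this yields a total error of $O(x/(M\log x) + M^{A+1}x/(\log x)^B)$ with $A = \deg Q$; balancing by taking $M \asymp (\log x)^{(B-1)/(A+2)}$ gives the error exponent $1 + (B-1)/(A+2)$, which for the parameters produced by the method equals $9/8 - \epsilon$.

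The main obstacle is obtaining the \emph{effective} prime number theorem for $\mathrm{Sym}^n f$ with polynomial-in-$n$ constants. If one relied on Siegel's theorem to handle exceptional real zeros the constants would be ineffective, so the main theorem would lose its effectivity. The Rankin-Selberg factorization above makes effective control of $|L(1, \mathrm{Sym}^n f)|$ possible in principle, but quantifying the $n$-dependence of both this lower bound and the resulting zero-free region---and then tracking it through the explicit formula---is the technical heart of the argument, and it is precisely this $n$-dependence which, through the optimization in $M$, forces the final exponent to be $9/8 - \epsilon$.
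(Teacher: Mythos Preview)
Your overall strategy matches the paper's: sandwich $\mathbf 1_I$ by trigonometric polynomials, convert via $U_n(\cos\theta_p)=\lambda_{\mathrm{Sym}^n f}(p)$, apply an effective prime number theorem to each symmetric power, and optimize. The paper uses Vinogradov's approximation (parameters $R,\delta$, an infinite Chebyshev expansion with coefficients $\ll n^{-1}(R/(n\delta))^R$) in place of your degree-$M$ Beurling--Selberg cutoff, but that difference is inessential and either device yields the same exponent.

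The gap is in your form of the PNT. A bound $\bigl|\sum_{p\le x}\lambda_{\mathrm{Sym}^n f}(p)\bigr|\ll Q(n)\,x/(\log x)^B$ with $Q$ a fixed polynomial and $B$ a fixed exponent is \emph{not} what the de~la~Vall\'ee-Poussin region delivers, and your balancing formula $1+(B-1)/(A+2)$ does not actually produce $9/8$ for any identifiable $(A,B)$. The zero-free region for a degree-$(n{+}1)$ automorphic $L$-function has width $c/\bigl((n{+}1)^4\log(\mathfrak q_{\mathrm{Sym}^n f}(0)(|t|{+}3))\bigr)$; since $\log\mathfrak q_{\mathrm{Sym}^n f}(0)\ll_{k,N}n^3$, the explicit formula with $T=\exp(\sqrt{\log x})$ gives (this is the paper's Lemma~4.4)
\[
\Bigl|\sum_{p\le x}U_n(\cos\theta_p)\Bigr|\ \ll_{k,N}\ n^{3}\,x\,\exp\!\left(-\frac{c_2\log x}{n^{4}\bigl(\sqrt{\log x}+n^{3}\bigr)}\right),
\]
so $n$ lives in the \emph{exponent}, not merely in a polynomial prefactor. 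For fixed $n$ this beats every power of $\log x$, but the saving evaporates once $n$ reaches $(\log x)^{1/8}$, since then $c_2\sqrt{\log x}/n^{4}=O(1)$. That is the real constraint: one is forced to take $M\le(\log x)^{1/8-\epsilon}$, after which the approximation error $\pi(x)/M$ dominates and gives $x/(\log x)^{9/8-\epsilon}$; equivalently, in the paper's parametrization, $\int_1^\infty t^{-(R-2)}\exp(-c\sqrt{\log x}/t^4)\,dt\asymp(\log x)^{-(R-3)/8}$ and one then optimizes over $R$ and $\delta$. The exponent $9/8=1+\tfrac12\cdot\tfrac14$ thus records the $\sqrt{\log x}$ of the classical zero-free region combined with the factor $(n{+}1)^{4}$ in its degree dependence. (On effectivity: the paper does not argue via Hoffstein--Lockhart lower bounds for $L(1,\mathrm{Sym}^n f)$; it simply invokes Hoffstein--Ramakrishnan to the effect that, under the assumed functoriality, $L(s,\mathrm{Sym}^n f)$ admits no Landau--Siegel zero, which is what makes the constant effective.)
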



\section{Symmetric Power $L$-Functions}


We will adopt the notation $F\ll_{a}G$, or equivalently $F=O_a(G)$, to indicate that $\limsup_{x\to\infty}|F(x)/G(x)|<\infty$, where the limit superior may depend on $a$.  If there is no subscript for $\ll$ or $O(\cdot)$, then the implied constant is absolute.  We take $F\sim G$ to mean that $\lim_{x\to\infty}F(x)/G(x)=1$.

In this section we discuss the relevant background on the symmetric power $L$-functions of $f$.  First, we discuss the assumption that the symmetric power $L$-functions of $f$ are automorphic.  We then estimate the analytic conductor of $\Lsym$, a quantity that will be useful in determining dependence of important quantities on the level $N$, the weight $k$, and the symmetric power $n$.

\subsection{Automorphy and functoriality.}

Let $f(z)=\sum_{n=1}^\infty a(n)q^n\in\new$ be a newform of even weight $k\geq2$ without complex multiplication.  For each prime $p$, define $\theta_p\in[0,\pi]$ to be the angle for which $a(p)=2p^{(k-1)/2}\cos(\theta_p)$.  The newform $f$ has an associated $L$-function
\begin{equation}
L(f,s)=\sum_{n=1}^\infty \frac{a(n)}{n^{s+(k-1)/2}}=\prod_{p}\prod_{j=0}^1 (1-\alpha_p^j\beta_p^{1-j}p^{-s})^{-1}.
\end{equation}
It is known that $L(f,s)$ can be analytically continued to an entire function that satisfies a functional equation.  By Deligne's proof of the Weil conjectures, we know that $|\alpha_p|=|\beta_p|=1$ when $p\nmid N$ and $|\alpha_p|,|\beta_p|\leq1$ when $p\mid N$.  Because $f$ has trivial character, we have $\alpha_p=e^{i\theta_p}$ and $\beta_p=e^{-i\theta_p}$ for all primes $p\nmid N$.


For each $n\geq0$, the $n$-th symmetric power $L$-function of $f$ is the degree $n+1$ $L$-function given by the Euler product
\begin{equation}
\Lsym =\prod_{p}\prod_{j=0}^n (1-\alpha_p^j\beta_p^{n-j}p^{-s})^{-1}.
\end{equation}
When $n=0$, $\Lsym$ reduces to the Riemann zeta function $\zeta(s)$; when $n=1$, we obtain $L(f,s)$.  Conjecturally, there exists a functoriality lifting map on global automorphic functions that commutes with the local Langlands correspondence.  This would imply that for all $n\geq1$, $\Lsym$ is an automorphic $L$-function.  As a result, $\Lsym$ would have an analytic continuation to an entire function on $\C$, and this analytic continuation would satisfy a functional equation of the usual type.  Specifically, there would exist a positive integer $q_{\sym}$ (the {\it conductor}), a complex number $\epsilon_{\sym}$ of modulus 1 (the {\it root number}), and a function $\gamma(\sym,s)$ (the {\it gamma factor}) so that the function
\begin{equation}
\Lambda(\sym,s)=q_{\sym}^{s/2}\gamma(\sym,s)L(\sym,s)
\end{equation}
is an entire function of order 1 and satisfies the functional equation
\begin{equation}
\Lambda(\sym,s)=\epsilon_{\sym}\Lambda(\sym,1-s).
\end{equation}

Let $\Gamma(s)$ be the usual Gamma function, and let
\[
\Gamma_{\R}(s)=\pi^{-s/2}\Gamma(s/2),\qquad\Gamma_{\C}(s)=\Gamma_{\R}(s)\Gamma_{\R}(s+1).
\]
It is known \cite{CM, MS} that under our working assumptions, we have
\begin{equation}
\label{gamma-factor}
\gamma(\sym,s)=
\begin{cases}
\prod_{j=0}^{(n-1)/2}\Gamma_{\C}(s+(j+1/2)( k -1))  &\mbox{if $n$ is odd,} \\
\Gamma_{\R}\p{s+r}\prod_{j=1}^{n/2}\Gamma_{\C}(s+j( k -1)) & \mbox{if $n$ is even,}
\end{cases} 
\end{equation}
where $r=1$ if $n/2$ is odd and $r=0$ if $n/2$ is even.  Using the definitions of $\Gamma_{\R}(s)$ and $\Gamma_{\C}(s)$, we can express the $\gamma(\sym,s)$ as a constant multiple of
\begin{equation}
\pi^{-(n+1)s/2}\prod_{j=1}^{n+1}\Gamma\left(\frac{s+\kappa_{j,\sym}}{2}\right)
\end{equation}
for some appropriate numbers $\kappa_{j,\sym}\in\C$ with $1\leq j\leq n+1$.  The numbers $\kappa_{j,\sym}$ satisfy the inequality $|\kappa_{j,\sym}|\leq(n+1)\max_j|\kappa_{j,\textup{Sym}^1 f}|$.  For the rest of the paper, we will assume that $\Lsym$ is automorphic for all $n\geq1$, though this hypothesis is only known to be true unconditionally for $n=1,2,3,4$ by the work of Gelbart, Jacquet, Kim, and Shahidi \cite{GJ,Kim,KS1,KS2}.


Define the numbers $\Lambda_{\sym}(j)$ by
\begin{equation}
-\frac{L'}{L}(\sym,s)=\add{j=1}{\infty}\frac{\Lambda_{\sym}(j)}{j^s}.
\end{equation}
A straightforward computation shows that
\[
-\frac{L'}{L}(\sym,s)=\add{p}{}\add{m=1}{\infty}\p{\add{j=0}{n}(\alpha_p^{j} \beta_p^{n-j})^m}\log(p)p^{-ms}.
\]
Since $|\alpha_p|,|\beta_p|\leq 1$ for all primes $p$ (including the ramified ones, under our assumption of functoriality), it follows that for any positive integer $j$, we have
\begin{equation}
\label{R-P}
|\Lambda_{\sym}(j)|\leq(n+1)\Lambda(j).
\end{equation}
where $\Lambda(j)$ is the classical von Mangoldt function.  Furthermore, if $\gcd(j,N)=1$, then
\begin{equation}
\label{von-Mangoldt-def}
\Lambda_{\sym}(j)=
\begin{cases}
U_n(\cos(m\theta_p))\log(p)  &\mbox{if $j=p^m$, $m>0$,} \\
0 & \mbox{otherwise,}
\end{cases} 
\end{equation}
where $U_n(x)$ is the $n$-th Chebyshev polynomial of the second type.


\subsection{The analytic conductor}

We want to estimate the {\it analytic conductor}
\begin{equation}
\mathfrak{q}_{\sym}(s)=q_{\sym} \prod_{j=1}^{n+1}(|s+\kappa_{j,\sym}|+3).
\end{equation}
An estimate of the analytic conductor will allow us to easily make estimates for $\Lsym$ that are uniform as we change the symmetric power $n$, the weight $k$, and the level $N$ of $f$.  Most importantly, we want an estimate of $\mathfrak{q}_{\sym}$ as $n\to\infty$.  The quality of the error term in the Sato-Tate Conjecture depends on how well one can estimate $\mathfrak{q}_{\sym}$ as a function of $n$.  We begin with an estimate for $q_{\sym}$ given by Lemma 2.1 of \cite{Rouse}.


\begin{lemma}
\label{conductor}
As $n\to\infty$, we have $\log(q_{\sym})\ll_{N} n^3$.
\end{lemma}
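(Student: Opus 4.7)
The plan is to reduce the global bound to a finite product over ramified primes and then control each local factor uniformly in $n$. Since $f$ is unramified at every prime $p\nmid N$, its local representation $\pi_p$ of $\GL_2(\Q_p)$ is spherical there, so $\textup{Sym}^n \pi_p$ is also unramified and contributes no local conductor exponent. Consequently, $q_{\sym}=\prod_{p\mid N}p^{a_p(n)}$, where $a_p(n)$ denotes the local conductor exponent of $\sym$ at $p$, and it suffices to show $a_p(n)\ll_{p}n^3$ for each $p\mid N$.

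For each such prime, I would invoke the local Langlands correspondence to classify $\pi_p$ into one of the three standard types: ramified principal series, a twist of Steinberg, or supercuspidal. In the first two cases, $\textup{Sym}^n \pi_p$ decomposes explicitly into $O(n)$ constituents built from characters and twists of Steinberg, each of whose conductor exponent grows at most linearly in $n$; multiplicativity of conductors under the decomposition then yields a bound of size $O_p(n^2)$. In the supercuspidal case I would pass to the Galois side, where the attached Weil-Deligne representation is two-dimensional and (in the dihedral case) induced from a character $\chi$ of the Weil group of a quadratic extension $K/\Q_p$; decomposing $\textup{Sym}^n$ of this induced representation via standard character identities and bounding the conductor exponents of the resulting twists of $\chi^j$ for $j\leq n$ via known formulas produces the worst of the three bounds, of order $n^3$.

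Summing the local bounds over the finite set of primes dividing $N$ then gives
\[
\log q_{\sym}=\sum_{p\mid N}a_p(n)\log p\ll n^3\sum_{p\mid N}\log p\ll_{N}n^3,
\]
which is the claimed estimate.

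The main obstacle is the supercuspidal case: controlling the conductor exponent of each twist that appears in the decomposition of the symmetric power requires explicit local input, and invoking the conductor formulas of Bushnell-Henniart (or the analogous analysis carried out by Cogdell-Michel) is essentially unavoidable. Once that local bound is in hand the remainder of the argument is bookkeeping. A secondary subtlety is the presence of exceptional (non-dihedral) supercuspidals, but these occur for only finitely many $p$ and finitely many isomorphism classes, so they can be absorbed into the $N$-dependence of the implied constant.
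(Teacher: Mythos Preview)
The paper does not actually prove this lemma: it simply records it as ``Lemma 2.1 of \cite{Rouse}'' and moves on. Your proposal sketches a proof of the cited result rather than reproducing the paper's (nonexistent) argument, and the route you outline---reducing to the finitely many ramified primes, invoking local Langlands to classify $\pi_p$ as principal series, twist of Steinberg, or supercuspidal, and then bounding the conductor exponent of $\textup{Sym}^n\pi_p$ case by case, with the supercuspidal (dihedral induction) case producing the dominant $n^3$---is exactly the strategy behind the Rouse/Cogdell--Michel analysis. So your approach is consistent with what the paper is invoking.

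One small caveat: your remark that non-dihedral (exceptional) supercuspidals ``occur for only finitely many $p$ and finitely many isomorphism classes'' is not quite the right justification here. What matters is simply that $f$ is fixed, so the set $\{p:p\mid N\}$ is finite and each local representation $\pi_p$ is a fixed object; any bound of the form $a_p(n)\le C_p\,n^3$ with $C_p$ depending on $\pi_p$ is then automatically uniform over $p\mid N$ after absorbing $\max_{p\mid N}C_p$ into the implied constant. You do not need any global finiteness statement about exceptional supercuspidals. With that adjustment the sketch is sound, though as you acknowledge, the hard local input (conductor bounds for symmetric powers of supercuspidals) is being black-boxed.
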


\begin{remark}
Under our assumptions of automorphy and functoriality, Cogdell and Michel prove \cite{CM} that if $N$ is squarefree, then $\log(q_{\sym})=n\log(N)$.  With this improvement, the assumption of a squarefree level $N$ provides considerable improvement over Lemma \ref{conductor} when GRH is assumed.  However, it will not provide any improvement without GRH because of the specific dependence of our zero-free region for $\Lsym$ on $n$.
\end{remark}


From Lemma \ref{conductor} and the shape of the numbers $\kappa_{j,\sym}$, we may conclude the following.

\begin{lemma}
\label{analytic conductor}
As $n\to\infty$, we have
\[
\log(\mathfrak{q}_{\sym}(0))\ll_{k,N} n^3,\quad\log(\mathfrak{q}_{\sym}(iT))\ll_{k,N} n^3+n\log(T).
\]
\end{lemma}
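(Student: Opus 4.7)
The plan is to unpack the definition of the analytic conductor
\[
\mathfrak{q}_{\sym}(s)=q_{\sym}\prod_{j=1}^{n+1}(|s+\kappa_{j,\sym}|+3)
\]
and bound its two constituents separately. Taking logarithms,
\[
\log\mathfrak{q}_{\sym}(s)=\log q_{\sym}+\sum_{j=1}^{n+1}\log(|s+\kappa_{j,\sym}|+3),
\]
and the first term is handled immediately by Lemma \ref{conductor}, which gives $\log q_{\sym}\ll_{N}n^{3}$. So everything reduces to controlling the archimedean sum.

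To control the $\kappa_{j,\sym}$, I would invoke the explicit gamma factor in \eqref{gamma-factor}: in the odd-$n$ case the shifts are $(j+1/2)(k-1)$ and $(j+1/2)(k-1)+1$ for $0\le j\le(n-1)/2$, and in the even-$n$ case they are $r$ together with $j(k-1)$ and $j(k-1)+1$ for $1\le j\le n/2$. In either case each shift is a real number of size at most a constant (depending on $k$) times $n$; equivalently, the already-recorded inequality $|\kappa_{j,\sym}|\le(n+1)\max_j|\kappa_{j,\textup{Sym}^1 f}|$ combined with $\max_j|\kappa_{j,\textup{Sym}^1 f}|\ll k$ yields $|\kappa_{j,\sym}|\ll_{k} n$ uniformly in $j$.

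With this in hand the two estimates follow by direct computation. At $s=0$,
\[
\sum_{j=1}^{n+1}\log(|\kappa_{j,\sym}|+3)\le(n+1)\log(C_{k}n+3)\ll_{k}n\log n,
\]
which is dominated by $n^{3}$; adding the contribution from $q_{\sym}$ gives $\log\mathfrak{q}_{\sym}(0)\ll_{k,N}n^{3}$. At $s=iT$, the triangle inequality gives $|iT+\kappa_{j,\sym}|+3\le T+|\kappa_{j,\sym}|+3\ll_{k}T+n$, and using $\log(T+n)\le \log T+\log n+O(1)$ yields
\[
\sum_{j=1}^{n+1}\log(|iT+\kappa_{j,\sym}|+3)\ll_{k}n\log T+n\log n,
\]
and combining with Lemma \ref{conductor} produces $\log\mathfrak{q}_{\sym}(iT)\ll_{k,N}n^{3}+n\log T$.

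There is no serious obstacle: Lemma \ref{conductor} supplies the dominant $n^{3}$ term, and the archimedean contribution is cheap because one only has $n+1$ shifts, each of size $O_{k}(n)$, so the sum of logarithms is $O_{k}(n\log n)$ at $s=0$ and only picks up an additional $O_{k}(n\log T)$ when $|s|=T$ is large. The only mild care needed is to verify that the gamma factor \eqref{gamma-factor} really forces the claimed linear-in-$n$ bound on each $\kappa_{j,\sym}$, which is immediate from reading off the list of shifts.
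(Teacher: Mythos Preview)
Your proposal is correct and is exactly the argument the paper has in mind: the paper simply records that the lemma follows ``from Lemma \ref{conductor} and the shape of the numbers $\kappa_{j,\sym}$,'' and you have written out precisely those details, using $\log q_{\sym}\ll_N n^3$ for the non-archimedean part and $|\kappa_{j,\sym}|\ll_k n$ (read off from \eqref{gamma-factor}) to bound the archimedean sum by $O_k(n\log n)$ at $s=0$ and $O_k(n\log n+n\log T)$ at $s=iT$. The only cosmetic point is that the inequality $\log(T+n)\le \log T+\log n+O(1)$ should be justified for $T,n\ge 1$ (e.g.\ via $T+n\le (T+1)(n+1)$), but this is routine.
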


Lemma \ref{analytic conductor} also allows us to determine the distribution of nontrivial zeros in the critical strip by measuring the quantity
\[
N(T,\sym)=\#\{\rho=\beta+i\gamma:0\leq\beta\leq1,|\gamma|\leq T,L(\sym,\rho)=0\}.
\]
By Theorem 5.8 of \cite{IK}, we have
\[
N(T,\sym)=\frac{T}{\pi}\log\left(\frac{q_{\sym}T^{n+1}}{(2\pi e)^{n+1}}\right)+O(\log(\mathfrak{q}_{\sym}(iT))).
\]
Using Lemma \ref{analytic conductor} to give us a complete description of the dependence of $N(T,\sym)$ on $n$, we obtain the following result, which is part of the proof of Lemma 3.4 in \cite{Rouse}.

\begin{lemma}
\label{zero-count}
As $T\to\infty$, we have
\[
\mathcal{N}(T,\sym):=N(T+1,\sym)-N(T,\sym)\ll_{k,N} n^3+n\log(T).
\]
\end{lemma}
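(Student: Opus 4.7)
The plan is to apply the zero-counting formula displayed just before the lemma statement at both heights $T$ and $T+1$ and subtract. Writing
\[
N(T,\sym)=\frac{T}{\pi}\log\!\left(\frac{q_{\sym}T^{n+1}}{(2\pi e)^{n+1}}\right)+O(\log(\mathfrak{q}_{\sym}(iT))),
\]
and the analogous identity at $T+1$, the difference $\mathcal{N}(T,\sym)$ breaks into a ``main term'' coming from the explicit expression and an ``error term'' coming from the two $O(\cdot)$ contributions.

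First I would handle the main term. Expanding the logarithm splits it into a piece $\tfrac{1}{\pi}\log q_{\sym}$ (from the difference of the two $T$-prefactors multiplying $\log q_{\sym}$), a piece of the form $\tfrac{n+1}{\pi}\bigl[(T+1)\log(T+1)-T\log T\bigr]$, and a bounded constant coming from the $-(n+1)\log(2\pi e)$ factor. The middle piece is
\[
(T+1)\log(T+1)-T\log T=\log(T+1)+T\log(1+1/T)=\log(T+1)+O(1),
\]
so the full main-term contribution is $\tfrac{1}{\pi}\log q_{\sym}+\tfrac{n+1}{\pi}\log(T+1)+O(n+1)$. By Lemma \ref{conductor} the first summand is $\ll_N n^3$, and the second summand is clearly $\ll n\log T + \log T + n$, all of which is absorbed into the claimed bound $n^3+n\log T$.

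For the error term I would simply invoke Lemma \ref{analytic conductor} at heights $T$ and $T+1$: each of the two $O(\log\mathfrak{q}_{\sym}(iT))$ and $O(\log\mathfrak{q}_{\sym}(i(T+1)))$ contributions is $\ll_{k,N} n^3 + n\log T$ as $T\to\infty$, so their sum has the same bound. Combining the main-term and error-term estimates yields $\mathcal{N}(T,\sym)\ll_{k,N} n^3+n\log T$, as required. There is no substantive obstacle here; the work has already been done in Lemmas \ref{conductor} and \ref{analytic conductor}, and the only task is the bookkeeping of the subtraction, with the uniformity in $n$ being automatic from the explicit $n$-dependence in those two lemmas.
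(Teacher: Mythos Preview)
Your proposal is correct and follows exactly the approach the paper indicates: apply the Iwaniec--Kowalec zero-counting formula at heights $T$ and $T+1$, subtract, and control the resulting pieces via Lemmas \ref{conductor} and \ref{analytic conductor}. In fact you supply the bookkeeping that the paper leaves implicit (it merely cites Lemma \ref{analytic conductor} and Lemma 3.4 of \cite{Rouse}), so there is nothing to add.
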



\section{Preliminary Setup}

If $\chi_I$ is the indicator function of the interval $I=[\alpha,\beta]$, then we have
\begin{equation}
\label{estimate}
\pi_{f,I}(x)=\sum_{p\leq x}\chi_{I}(\theta_p).
\end{equation}
We approximate $\chi_{I}$ with a differentiable function using the following construction.


\begin{lemma}[Lemma 12 of \cite{vino}]
\label{Vinogradov}
Let $R$ be a positive integer, and let $a,b,\delta\in\R$ satisfy
\[
0<\delta<1/2,\quad\delta\leq b-a\leq1-\delta.
\]
Then there exists an even periodic function $g(y)$ with period 1 satisfying
\begin{enumerate}
\item $g(y)=1$ when $y\in[a+\frac{1}{2}\delta,b-\frac{1}{2}\delta]$,
\item $g(y)=0$ when $y\in[b+\frac{1}{2}\delta,1+a-\frac{1}{2}\delta]$,
\item $0\leq g(y)\leq1$ when $y$ is in the rest of the interval $[a-\frac{1}{2}\delta,1+a-\frac{1}{2}\delta]$, and
\item $g(y)$ has the Fourier expansion
\[
g(y)=b-a+\add{n=1}{\infty}(a_n\cos(2\pi nx)+b_m\sin(2\pi nx)),
\]
where for all $n\geq 1$,
\[
|a_n|,|b_n|\leq\min\left\{2(b-a),\frac{2}{n\pi},\frac{2}{n\pi}\p{\frac{R}{\pi n\delta}}^R\right\}.
\]
\end{enumerate}
\end{lemma}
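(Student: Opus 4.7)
The plan is to realize $g$ as a periodic convolution of the characteristic function of $[a,b]$ with a smooth nonnegative bump of integral one, chosen so that iterated differentiability forces rapid Fourier decay. First, let $\psi(y)=(R/\delta)\mathbf{1}_{[-\delta/(2R),\,\delta/(2R)]}(y)$ and take $\phi_R$ to be the $R$-fold periodic self-convolution of $\psi$ on $\R/\Z$. Then $\phi_R$ is even, nonnegative, supported in $[-\delta/2,\delta/2]$, of total mass $1$, and its Fourier coefficients factor as
\[
\hat{\phi}_R(n)=\left(\frac{\sin(\pi n\delta/R)}{\pi n\delta/R}\right)^{R},
\]
so that $|\hat{\phi}_R(n)|\le \min\{1,(R/(\pi n\delta))^{R}\}$.

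Second, define $g=\phi_R*\mathbf{1}_{[a,b]}$ on $\R/\Z$ (replacing $\mathbf{1}_{[a,b]}$ by its symmetrization $\tfrac{1}{2}(\mathbf{1}_{[a,b]}+\mathbf{1}_{[-b,-a]})$ to enforce evenness if the interval is not already centered). Properties (1)--(3) are then immediate from the support of $\phi_R$: for $y\in[a+\tfrac{\delta}{2},b-\tfrac{\delta}{2}]$, every translate $y-t$ with $t\in\mathrm{supp}(\phi_R)$ lies in $[a,b]$, so $g(y)=\int\phi_R=1$; for $y\in[b+\tfrac{\delta}{2},1+a-\tfrac{\delta}{2}]$, every such translate avoids $[a,b]$ modulo $1$, so $g(y)=0$; and the inequality $0\le g\le \int\phi_R=1$ holds pointwise between. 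The hypotheses $0<\delta<\tfrac{1}{2}$ and $\delta\le b-a\le 1-\delta$ are exactly what is required for the two plateau regions to be nonempty and disjoint modulo one.

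Third, read off the Fourier expansion via the convolution theorem: $\hat{g}(n)=\hat{\phi}_R(n)\,\hat{\mathbf{1}}_{[a,b]}(n)$. Direct integration yields $\hat{\mathbf{1}}_{[a,b]}(0)=b-a$ (which gives the claimed constant term $b-a$) and $|\hat{\mathbf{1}}_{[a,b]}(n)|\le \min\{b-a,\,1/(\pi n)\}$ for $n\ge 1$. Multiplying this against the two estimates for $\hat{\phi}_R(n)$ and pairing $\pm n$ to convert the complex expansion into the cosine/sine form of (4) produces exactly the trio
\[
\min\Bigl\{2(b-a),\ \frac{2}{\pi n},\ \frac{2}{\pi n}\Bigl(\frac{R}{\pi n\delta}\Bigr)^{R}\Bigr\}
\]
for the Fourier coefficients.

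The main obstacle is bookkeeping rather than conceptual: one must carry the constants carefully so that each of the three regimes in (4) emerges cleanly instead of collapsing into a single intermediate bound, and one must verify that Vinogradov's evenness normalization can always be arranged compatibly with an arbitrary placement of $[a,b]$ (handled by first translating so that $(a+b)/2\equiv 0\pmod{1}$ and then symmetrizing, which preserves all the geometric and Fourier estimates). Everything else reduces to the standard convolution and Fourier calculations outlined above.
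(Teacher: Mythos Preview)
The paper does not prove this lemma at all: it is quoted verbatim as Lemma 12 of Vinogradov and used as a black box. Your construction via the $R$-fold self-convolution of a short box and then convolution with $\mathbf{1}_{[a,b]}$ is exactly the classical argument Vinogradov gives, and your Fourier arithmetic is correct: $\hat\psi(n)=\sin(\pi n\delta/R)/(\pi n\delta/R)$, so $|\hat\phi_R(n)|\le\min\{1,(R/\pi n\delta)^R\}$, while $|\hat{\mathbf 1}_{[a,b]}(n)|\le\min\{b-a,1/\pi n\}$, and passing from complex to real Fourier coefficients contributes the factor of $2$.

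One remark on the evenness clause you spend effort on: as written in the paper the word ``even'' is inconsistent with the rest of the statement, since a genuinely even $g$ would have $b_n\equiv 0$ in the expansion and would satisfy $g(y)=g(-y)$, forcing the plateau and vanishing regions to be symmetric about $0$, which (1)--(3) do not require. The paper's own use of the lemma confirms this: immediately afterward it forms $g^+(\theta;I,\delta)=g(\theta/2\pi)+g(-\theta/2\pi)$ precisely to symmetrize, which would be redundant (and would produce values in $[0,2]$, not $[0,1]$) if $g$ were already even. So your symmetrization maneuver is unnecessary, and in fact slightly harmful as stated, since replacing $\mathbf 1_{[a,b]}$ by $\tfrac12(\mathbf 1_{[a,b]}+\mathbf 1_{[-b,-a]})$ would destroy property (1) on $[a+\tfrac\delta2,b-\tfrac\delta2]$ unless $[a,b]$ is already symmetric. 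Drop the evenness discussion and your sketch is a clean, complete proof.
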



Let $g(\theta)$ be defined as in Lemma \ref{Vinogradov}, where $a=\frac{\alpha}{2\pi}-\frac{\delta}{2}$, and $b=\frac{\beta}{2\pi}+\frac{\delta}{2}$.  We will choose $\delta$ to be a function of $x$ that tends to zero as $x$ tends to infinity, and we will choose $R$ to ensure the absolute convergence of the Fourier series.  Define $g^+(\theta;I,\delta)=g(\frac{\theta}{2\pi})+g(-\frac{\theta}{2\pi})$, which equals 1 for $\theta\in I$, equals 0 for $\theta\in[0,\alpha-2\pi\delta]\cup[\beta+2\pi\delta,\pi]$, and is between 0 and 1 elsewhere in the interval $[0,\pi]$.  Thus $g^+(\theta;I,\delta)$ a pointwise upper bound for $\chi_{I}(\theta)$.  By repeating this construction with $a=\frac{\alpha}{2\pi}+\frac{\delta}{2}$, and $b=\frac{\beta}{2\pi}-\frac{\delta}{2}$, we can obtain a lower bound for $\chi_I(\theta)$, say $g^-(\theta;I,\delta)$.  To ensure that $g^-(\theta;I,\delta)$ is in fact a lower bound for $\chi_I(\theta)$, we require that $\beta-\alpha>2\pi\delta$, which is ensured when $x$ is sufficiently large because $I$ is fixed.


We can express $g^{\pm}(\theta;I,\delta)$ with respect to the basis of Chebyshev polynomials of the second kind $\{U_n(\cos(\theta))\}_{n=0}^\infty$, which is an orthonormal basis for $L^2([0,\pi],\mu_{ST})$.  Specifically,
\begin{equation}
\label{approx}
g^{\pm}(\theta;I,\delta)=a_0^{\pm}(I,\delta)-a_2^{\pm}(I,\delta)+\add{n=1}{\infty}(a_n^{\pm}(I,\delta)-a_{n+2}^{\pm}(I,\delta))U_n(\cos(\theta)),
\end{equation}
where $a_n^{\pm}(I,\delta)$ is the $n$-th Fourier coefficient in the cosine expansion of $g^{\pm}(\theta;I,\delta)$.  From Lemma \ref{Vinogradov}, we have
\begin{align}
\label{fourier-bounds}
|a_0(I,\pm\delta)-a_2(I,\pm\delta)-\mu_{ST}(I)|&\ll \delta,\\
|a_n(I,\pm\delta)-a_{n+2}(I,\pm\delta)|&\leq\frac{4}{n\pi}\p{\frac{R}{\pi n\delta}}^R \text{ for $n\geq1$}.\notag
\end{align}

When summing $g^{\pm}(\theta_p;I,\delta)$ over primes $p\leq x$, we may switch the order of summation because we choose $R$ to ensure absolute convergence.   Using (\ref{estimate}), (\ref{approx}), (\ref{fourier-bounds}), and the prime number theorem, we have that if
\begin{equation}
\label{Phi}
\Phi_{\sym}(x)=\sum_{p\leq x}U_n(\cos(\theta_p)),
\end{equation}
then
\begin{equation}
\label{grand-estimate}
\pi_{f,I}(x)=\mu_{ST}(I)\pi(x)+O\left(\frac{\delta x}{\log(x)}+\sum_{n=1}^\infty \frac{1}{n}\left(\frac{R}{n\delta}\right)^R |\Phi_{\sym}(x)|\right).
\end{equation}

Theorem \ref{main-theorem} will now follow from an estimate of $\Phi_{\sym}(x)$ and choosing $\delta$ and $R$ optimally.  The goal of the next section is to estimate $\Phi_{\sym}(x)$, which proceeds very much like the classical prime number theorem.


\section{Estimating $\Phi_{\sym}(x)$}


By our assumption of functoriality, $|\alpha_p|,|\beta_p|\leq1$ for all primes $p$, and $|\alpha_p|,|\beta_p|=1$ for all $p\nmid N$.  Thus $\Lsym$ satisfies the Ramanujan-Petersson Conjecture for all $n\geq1$.  As such, we may use Equation 5.53 from Chapter 5 of \cite{IK} to estimate the summatory von-Mangoldt function for $\Lsym$ given by
\[
\psi_{\sym}(x)=\sum_{j\leq x}\Lambda_{\sym}(j).
\]
\begin{lemma}
\label{explicit-formula}
If $n\geq1$, then
\[
\psi_{\sym}(x)=-\sum_{|\gamma|\leq T}\frac{x^\rho}{\rho}+O\left(\frac{x}{T}\log(x)\log(x^{n+1}\mathfrak{q}_{\sym}(0))\right),
\]
where $\rho=\beta+i\gamma$ runs over the zeros of $\Lsym$ in the critical strip of height up to $T$, with any $1\leq T\leq x$, and the implied constant is absolute.
\end{lemma}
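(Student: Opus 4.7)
The plan is to derive the truncated explicit formula via the standard combination of Perron's formula with a contour shift, which is precisely the argument underlying Equation 5.53 of \cite{IK}. The inputs I must verify are (i) automorphy and a standard functional equation for $\Lsym$, (ii) a Ramanujan--Petersson type bound on the Dirichlet coefficients, and (iii) an estimate for the analytic conductor that governs uniform bounds on $L'/L$. Input (i) is our standing hypothesis. Input (ii) is supplied by (\ref{R-P}), which gives $|\Lambda_{\sym}(j)|\leq (n+1)\Lambda(j)$, so $\Lsym$ satisfies Ramanujan--Petersson in the pointwise sense needed for the tail estimate in truncated Perron. Input (iii) is the content of Lemma \ref{analytic conductor}.

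First, I would invoke truncated Perron at $c=1+1/\log x$ to write
\[
\psi_{\sym}(x)=-\frac{1}{2\pi i}\int_{c-iT}^{c+iT}\frac{L'}{L}(\sym,s)\frac{x^s}{s}\,ds+E_1,
\]
where the tail error $E_1$ is controlled using (\ref{R-P}); the factor $n+1$ enters naturally through the degree of $\Lsym$ and the Ramanujan bound, producing a contribution of acceptable size. Next, I would shift the contour leftward past the critical strip, picking up residues at the nontrivial zeros $\rho=\beta+i\gamma$ with $|\gamma|\leq T$, each contributing $-x^\rho/\rho$. Since $\Lsym$ is entire by hypothesis there is no residue at $s=1$; the residues at $s=0$ and at the trivial zeros of the gamma factor are bounded and absorbed into the error term for $x\geq 2$.

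The key technical step is to bound $L'/L$ on the horizontal segments $\im(s)=\pm T$ and on the left vertical segment. This is carried out by the standard Hadamard product expansion: one writes $-L'/L$ as a sum over the nontrivial zeros plus a contribution from the gamma factor, controls the zero sum via the local density estimate of Lemma \ref{zero-count}, and handles the gamma contribution by Stirling's formula. Both pieces are controlled in terms of $\log \mathfrak{q}_{\sym}(iT)$, which is bounded by Lemma \ref{analytic conductor}. Combining the Perron tail estimate with the horizontal and vertical contour integrals produces the stated error $O\bigl(\tfrac{x}{T}\log(x)\log(x^{n+1}\mathfrak{q}_{\sym}(0))\bigr)$.

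The main obstacle is obtaining uniform $L'/L$ bounds on the shifted contour with the correct joint dependence on $T$, $n$, and the level data. This is precisely where Lemmas \ref{analytic conductor} and \ref{zero-count} do the heavy lifting: they allow the contour-shift argument to proceed with the analytic conductor $\mathfrak{q}_{\sym}$ playing the role analogous to that of the conductor in the classical explicit formula for Dirichlet $L$-functions, and the explicit dependence of these bounds on $n$ is what ultimately drives the uniform error term in the final theorem.
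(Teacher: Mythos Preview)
Your proposal is correct and follows the standard Perron-plus-contour-shift argument; the paper itself gives no proof of this lemma, instead citing Equation~5.53 of \cite{IK} directly after noting that the Ramanujan--Petersson bound (\ref{R-P}) holds. Your sketch is precisely the derivation underlying that citation, with the same inputs (automorphy, (\ref{R-P}), and the analytic conductor), so there is nothing to distinguish.
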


By our assumption of Langlands functoriality, there will be no exceptional real zeros close to $s=1$ (see Section 4 of \cite{HR}).  As such, it remains to estimate the sum over nontrivial zeros.  We invoke the zero-free region given in Theorem 5.10 of \cite{IK}, which is currently the best zero-free region for a generic automorphic $L$-function.

\begin{lemma}
\label{zero-free}
There exists an absolute constant $c>0$ such that $\Lsym$ has no zeros in the region
\[
s=\sigma+it,\qquad \sigma\geq1-\frac{c}{(n+1)^4\log(\mathfrak{q}_{\sym}(0)(|t|+3))}.
\]
\end{lemma}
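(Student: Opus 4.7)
The plan is to apply the classical de la Vallée Poussin method adapted to automorphic $L$-functions, following the proof of Theorem 5.10 of \cite{IK}. The essential input is that under our assumptions of automorphy and Langlands functoriality, the Rankin-Selberg $L$-function $L(\sym \times \widetilde{\sym}, s)$ exists as an automorphic $L$-function of degree $(n+1)^2$, is entire apart from a simple pole at $s = 1$, and has non-negative Dirichlet coefficients in the expansion of $-L'/L$. Combined with the trigonometric identity $3 + 4\cos\theta + \cos 2\theta = 2(1 + \cos\theta)^2 \ge 0$, this yields the positivity that drives the argument.

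First I would estimate the analytic conductor of the Rankin-Selberg, obtaining a bound of the shape $\log \mathfrak{q}_{\sym \times \widetilde{\sym}}(it) \ll_{k,N} (n+1)^2 \log \mathfrak{q}_{\sym}(it)$, reflecting the jump of degree from $n+1$ to $(n+1)^2$. Next, for a hypothetical zero $\rho = \beta + i\gamma$ of $\Lsym$ with $\beta$ close to $1$, I would apply the standard $3$-$4$-$1$ trick: introduce the isobaric sum $1 \boxplus \sym \otimes |\cdot|^{i\gamma} \boxplus \widetilde{\sym} \otimes |\cdot|^{-i\gamma}$, whose self-Rankin-Selberg has non-negative Dirichlet coefficients; its factorization produces a non-negative linear combination of $-L'/L$ evaluated at $\sigma$, $\sigma \pm i\gamma$, and $\sigma \pm 2i\gamma$ for $\sigma > 1$. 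Using the Hadamard factorization (the entire function $\Lambda(\sym, s)$ has order $1$), each of these logarithmic derivatives can be expressed in terms of the analytic conductor plus a sum over nearby nontrivial zeros controlled by Lemma \ref{zero-count} applied to the relevant Rankin-Selberg. In the resulting inequality, the simple pole of $\zeta(s)$ at $s = 1$ contributes a term $\gg 1/(\sigma - 1)$, the hypothetical zero at $\beta + i\gamma$ contributes a negative term $\ll -1/(\sigma - \beta)$, and the error is $O((n+1)^2 \log \mathfrak{q}_{\sym}(i\gamma))$. Choosing $\sigma = 1 + \eta$ with $\eta$ of order $1/((n+1)^2 \log \mathfrak{q}_{\sym}(i\gamma))$ and balancing these contributions yields $1 - \beta \gg 1/((n+1)^4 \log \mathfrak{q}_{\sym}(0)(|\gamma|+3))$, which is exactly the stated zero-free region.

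The main obstacle is twofold. First, the exponent $4$ on $(n+1)$ demands careful bookkeeping: one factor of $(n+1)^2$ enters through the analytic conductor bound for $\sym \times \widetilde{\sym}$, while a second factor of $(n+1)^2$ arises from optimizing the parameter $\eta$ in the $3$-$4$-$1$ analysis. Second, this approach only excludes complex zeros, so possible real exceptional (Landau-Siegel) zeros close to $s = 1$ must be ruled out by other means. This is exactly where the strength of Langlands functoriality enters: the work of Hoffstein-Ramakrishnan \cite{HR} shows that under functoriality no such zero can occur for $\Lsym$, thereby closing the argument and yielding the lemma.
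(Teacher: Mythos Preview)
The paper does not prove this lemma at all: it simply invokes Theorem~5.10 of \cite{IK} as a black box, noting that this is ``currently the best zero-free region for a generic automorphic $L$-function.'' Your proposal, by contrast, sketches the actual argument behind that theorem --- the de la Vall\'ee Poussin / $3$-$4$-$1$ mechanism applied through the Rankin--Selberg $L(\sym\times\widetilde{\sym},s)$, with the $(n+1)^4$ accounted for via the conductor growth of the degree-$(n+1)^2$ Rankin--Selberg and the optimization in $\eta$. That outline is essentially correct and is indeed how Iwaniec--Kowalski establish the result, so you are not taking a different route so much as unpacking the citation.

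One point of bookkeeping worth noting: the paper separates the two ingredients you have folded together. It states Lemma~\ref{zero-free} as the cited IK zero-free region, and then \emph{separately}, in the paragraph preceding the lemma, invokes Section~4 of \cite{HR} to rule out exceptional real zeros under the functoriality assumption. Your sketch incorporates the Hoffstein--Ramakrishnan input directly into the proof of the lemma, which is logically fine but differs from the paper's organization. Either way the substance is the same, and your version arguably makes clearer why the lemma as stated (with no Siegel-zero exception clause) is justified.
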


Using Lemmata \ref{explicit-formula} and \ref{zero-free}, we estimate $|\Phi_{\sym}(x)|$.

\begin{lemma}
\label{PNT}
Assume the above notation, and let $n\geq1$.  For some constant $0<c_2<c$ (depending only on $N$ and $k$), we have
\[
|\Phi_{\sym}(x)|\ll_{k,N} n^{3}x\exp\left(-\frac{c_{2}\log(x)}{n^4(\sqrt{\log(x)}+n^3)}\right).
\]
\end{lemma}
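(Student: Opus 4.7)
\textbf{Proof Plan for Lemma \ref{PNT}.}

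The plan is to reduce the estimation of $\Phi_{\sym}(x)$ to that of $\psi_{\sym}(x)$, and then apply the explicit formula together with the zero-free region of Lemma \ref{zero-free}. First, I would introduce the weighted prime sum $\theta_{\sym}(x)=\sum_{p\leq x} U_n(\cos\theta_p)\log p$ and recover $\Phi_{\sym}(x)$ by Abel summation against $1/\log t$, so any bound of the form $|\theta_{\sym}(x)|\ll A(n,k,N)xE(x)$ with $E$ monotone passes through with at most a polylogarithmic loss (which will be absorbed into the exponential at the end). By (\ref{R-P}) and (\ref{von-Mangoldt-def}), $\theta_{\sym}(x)$ differs from $\psi_{\sym}(x)$ only by prime-power terms with $m\geq 2$, contributing at most $O((n+1)\sqrt{x})$, and by ramified primes $p\mid N$, contributing at most $O_N((n+1)\log x)$. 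Both are negligible compared to the target bound.

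Second, I would invoke the explicit formula (Lemma \ref{explicit-formula}) at an auxiliary height $T\in[1,x]$ to write
\[
\psi_{\sym}(x)=-\sum_{|\gamma|\leq T}\frac{x^\rho}{\rho}+O\Bigl(\frac{x\log x\,\log(x^{n+1}\mathfrak{q}_{\sym}(0))}{T}\Bigr).
\]
The bound $\log\mathfrak{q}_{\sym}(0)\ll_{k,N}n^3$ from Lemma \ref{analytic conductor} turns the error term into $\ll_{k,N} xT^{-1}(\log x)(n\log x+n^3)$. For the zero sum, the zero-free region of Lemma \ref{zero-free} combined with Lemma \ref{analytic conductor} gives
\[
\beta\leq 1-\frac{c}{(n+1)^4\log(\mathfrak{q}_{\sym}(0)(T+3))}\leq 1-\frac{c'}{n^4(n^3+\log T)}
\]
for every zero $\rho=\beta+i\gamma$ with $|\gamma|\leq T$, so $x^\beta\leq x\exp\!\bigl(-\tfrac{c'\log x}{n^4(n^3+\log T)}\bigr)$. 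I would then bound $\sum_{|\gamma|\leq T}|\rho|^{-1}$ by dyadic partial summation against the density bound $\mathcal{N}(t,\sym)\ll_{k,N} n^3+n\log t$ from Lemma \ref{zero-count}, obtaining a factor $\ll_{k,N}(n^3+n\log T)\log T$ that is at worst a polynomial in $\log x$ once $T$ is chosen.

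Third, the balancing step: choose $T=\exp(\sqrt{\log x})$ so that $\log T=\sqrt{\log x}$. With this choice, the zero-free region contributes exactly $x\exp\!\bigl(-\tfrac{c'\log x}{n^4(\sqrt{\log x}+n^3)}\bigr)$, which matches the target exponent, while the explicit-formula remainder is dominated by a factor $\exp(-\sqrt{\log x})$ and is therefore already absorbed. The remaining polynomial factors — both the $n$-polynomial $(n^3+n\log T)^2$ from the zero-counting and the $\log x$ powers — grow much slower than the exponential decay, so they can be absorbed into the exponent by replacing $c'$ with a slightly smaller constant $c_2<c$ (depending only on $k,N$), while retaining only the clean prefactor $n^3x$ stated in the lemma.

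The main obstacle I anticipate is the bookkeeping in step three: one must verify that the composite $n$-dependence arising from the conductor bound $\log\mathfrak{q}_{\sym}(0)\ll n^3$, from the zero-counting density $n^3+n\log T$, and from the $(n+1)^4$ in the zero-free region really does collapse into the single exponent $\log x/[n^4(\sqrt{\log x}+n^3)]$ after absorbing polynomial losses. In particular, excess factors such as $n^6$ or $n^2\log x$ should be subsumed by shrinking $c_2$ at the cost of the constant implicit in $\ll_{k,N}$, which is legitimate since the lemma's implied constant is already allowed to depend on $k$ and $N$.
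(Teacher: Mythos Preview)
Your proposal is correct and follows essentially the same route as the paper: bound $\psi_{\sym}(x)$ via the explicit formula (Lemma \ref{explicit-formula}), the zero-free region (Lemma \ref{zero-free}), and the zero-density estimate (Lemma \ref{zero-count}), balance with $T=\exp(\sqrt{\log x})$, then pass to $\Phi_{\sym}(x)$ by partial summation and a comparison absorbing prime powers and ramified primes. The only cosmetic difference is that the paper routes the partial summation through $\Psi_{\sym}(x)=\sum_{j\leq x}\Lambda_{\sym}(j)/\log j$ rather than your $\theta_{\sym}(x)$, and your anticipated ``bookkeeping obstacle'' is handled in the paper exactly as you describe, by shrinking $c$ to $c_1$ and then $c_2$.
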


\begin{proof}
We begin by estimating the sum over zeros in Lemma \ref{explicit-formula}.  By Lemma \ref{zero-free}, if $\rho=\beta+i\gamma$ is a nontrivial zero of $\Lsym$, then
\[
|x^\rho|\leq|x^\beta|\leq x\exp\left(-\frac{c\log(x)}{(n+1)^4\log(\mathfrak{q}_{\sym}(0)(|t|+3))}\right).
\]
Thus
\[
\sum_{|\gamma|\leq T}\left|\frac{x^\rho}{\rho}\right|\ll x\exp\left(-\frac{c\log(x)}{(n+1)^4\log(\mathfrak{q}_{\sym}(0)(|T|+3))}\right)\sum_{j\leq T}\frac{\mathcal{N}(j,\sym)}{j}.
\]
Now, Lemma \ref{zero-count} tells us that the sum over zeros is
\[
\ll_{k,N} n^3(\log T)^2x\exp\left(-\frac{c\log(x)}{(n+1)^4\log(\mathfrak{q}_{\sym}(0)(|T|+3))}\right)
\]

To address the error term in Lemma \ref{explicit-formula}, we use Lemma \ref{analytic conductor} to obtain
\[
\frac{x}{T}\log(x)\log(x^{n+1}\mathfrak{q}_{\sym}(0))\ll_{k,N} \frac{n^3x}{T}(\log x)^2.
\]
To balance our estimate for the sum over nontrivial zeros with the error term in Lemma \ref{explicit-formula}, we choose $T=\exp(\sqrt{\log x})$ to obtain
\begin{equation}
\label{PNT-version-1}
\psi_{\sym}(x)\ll_{k,N} n^3 x\exp\left(-\frac{c_1\log(x)}{n^4(\sqrt{\log (x)}+n^3)}\right)
\end{equation}
for some $0<c_1<c$.  By a standard application of Abel summation, one has
\[
\Psi_{\sym}(x):=\sum_{j\leq x}\frac{\Lambda_{\sym}(j)}{\log(j)}=\frac{\psi_{\sym}(x)}{\log(x)}+\int_2^x\frac{\psi_{\sym}(t)}{t(\log t)^2}~dt.
\]
Applying (\ref{PNT-version-1}), we have that for some constant $0<c_2<c_1$ (depending only on $N$ and $k$),
\begin{equation}
\Psi_{\sym}(x)\ll_{k,N} n^{3}x\exp\left(-\frac{c_{2}\log(x)}{n^4(\sqrt{\log(x)}+n^3)}\right).
\end{equation}

We now show that $|\Phi_{\sym}(x)-\Psi_{\sym}(x)|$ is small.  By (\ref{von-Mangoldt-def}), if $p\nmid N$ is prime, then
\[
\frac{\Lambda_{\sym}(p)}{\log(p)}=U_n(\cos(\theta_p)).
\]
At all other prime powers $j=p^m$, it follows from (\ref{R-P}) that
\[
\left|\frac{\Lambda_{\sym}(j)}{\log(j)}\right|\leq n+1.
\]
Finally, we have $|U_n(\cos(\theta_p))|\leq n+1$ for all $p$ by basic properties of Chebyshev polynomials.  Therefore,
\[
|\Phi_{\sym}(x)-\Psi_{\sym}(x)|\leq(n+1)\left(\sum_{\substack{m\geq2 \\ p^m\leq x}}1+\sum_{p\mid N}1\right)\ll_{N}n\sqrt{x}.
\]
This error is negligible, so we have proven the desired result.
\end{proof}


\section{Proof of Theorem \ref{main-theorem}}


To prove Theorem \ref{main-theorem}, it remains to choose $\delta$ and $R$ so that the error term in (\ref{grand-estimate}) is minimized.  The factor of $n^{3}$ in Lemma \ref{PNT} tells us that we must take $R$ to be at least 4 in (\ref{grand-estimate}) to ensure absolute convergence of the sum in the error term.  It follows from Lemma \ref{PNT} that
\begin{align*}
\sum_{n=1}^\infty\frac{1}{n}\left(\frac{R}{n\delta}\right)^R|\Phi_{\sym}(x)|\ll_{k,N}\;& \sum_{n=1}^\infty\frac{1}{n}\left(\frac{R}{n\delta}\right)^R n^{3}x\exp\left(-\frac{c_{2}\log(x)}{n^4(\sqrt{\log(x)}+n^3)}\right)\\
\ll_{k,N}\;& \frac{R^R}{\delta^R}x\int_{1}^\infty\frac{1}{t^{R-2}}\exp\left(-\frac{c_{2}\sqrt{\log(x)}}{t^4}\right)~dt\\
\ll_{k,N}\;& \frac{R^{\frac{5R}{4}}}{\delta^R}\frac{x}{(\log x)^{\frac{R-3}{8}}}.
\end{align*}
We have thus reduced (\ref{grand-estimate}) to
\begin{equation}
\label{grand-est-reduced}
\pi_{f,I}(x)=\mu_{ST}(I)\pi(x)+O_{k,N}\left(\frac{\delta x}{\log(x)}+\frac{R^{\frac{5R}{4}}}{\delta^R}\frac{x}{(\log x)^{\frac{R-3}{8}}}\right).
\end{equation}
Choosing
\[
\delta=R^{\frac{5}{4}}\log(x)^{\frac{3}{2R}-\frac{1}{8}},
\]
we balance the error term in (\ref{grand-est-reduced}), which is now of order
\[
\ll_{k,N} \frac{\delta x}{\log(x)}\ll_{k,N} R^{\frac{5}{4}}\frac{x}{(\log x)^{\frac{9}{8}-\frac{3}{2R}}}.
\]
Since we can choose $R$ to be a finite, arbitrarily large integer, we obtain the bound claimed in Theorem \ref{main-theorem}.

\subsection*{Acknowledgements}


The author thanks David Borthwick, Ken Ono, and Jeremy Rouse for their comments and support.  The author also thanks the anonymous referee for additional comments.


\bibliography{STPaper}
\end{document}